\documentclass[letterpaper, 10 pt, conference]{ieeeconf}
\IEEEoverridecommandlockouts
\usepackage{cite}
\usepackage{amsmath,amssymb,amsfonts,bm}
\usepackage{enumerate}
\usepackage{graphicx}
\usepackage{textcomp}
\usepackage{xcolor}
\usepackage{color}
\usepackage{amsthm}
\usepackage{amsmath}
\usepackage{algorithm}
\usepackage{algpseudocode}

\usepackage[labelsep=period]{caption}

\usepackage{tikz}
\usepackage{dsfont}
\newtheorem{thm}{Theorem}

\newtheorem{cor}{Corollary}
\newtheorem{lem}{Lemma}

\theoremstyle{definition}

\newtheorem{defn}{Definition}
\newtheorem{rem}{Remark}

\newtheorem{prob}{Problem}
\usepackage[utf8]{inputenc}
\usepackage{color}

\usepackage[titles]{tocloft}

\usepackage{hyperref}

\begin{document}

\title{Linearly Solvable Continuous-Time General-Sum \\ Stochastic Differential Games}

\author{Monika Tomar and Takashi Tanaka 
\thanks {M. Tomar and T. Tanaka are with the Edwardson School of Industrial Engineering, School of Aeronautics and Astronautics, Elmore Family School of Electrical and Computer Engineering, Purdue University, West Lafayette, IN, USA, respectively.
Email: \href{mailto:tomarm@purdue.edu}{tomarm@purdue.edu},
\href{mailto:tanaka16@purdue.edu}{tanaka16@purdue.edu}}
\thanks{This work is supported by AFOSR DSCT program under grant FA9550-25-1-0347 and DARPA COMPASS program under grant HR0011-25-3-0210}}
\maketitle
\begin{abstract}
This paper introduces a class of continuous-time, finite-player stochastic general-sum differential games that admit solutions through an exact linear PDE system. We formulate a distribution planning game utilizing the cross-log-likelihood ratio to naturally model multi-agent spatial conflicts, such as congestion avoidance. By applying a generalized multivariate Cole-Hopf transformation, we decouple the associated non-linear Hamilton-Jacobi-Bellman (HJB) equations into a system of linear partial differential equations. This reduction enables the efficient, grid-free computation of feedback Nash equilibrium strategies via the Feynman-Kac path integral method, effectively overcoming the curse of dimensionality.
\end{abstract}
\section{Introduction}

Stochastic games provide a natural framework for modeling interacting decision makers under uncertainty, and they arise in control, economics, traffic systems, and networked multi-agent settings. In such problems, the feedback Nash equilibrium is especially important because it is a closed-loop, state-dependent, and strongly time-consistent solution concept\cite{BasarOlsder1998}. The difficulty, however, is that computing feedback Nash equilibria in stochastic differential games typically leads to coupled nonlinear Hamilton-Jacobi-Bellman (HJB) or Hamilton-Jacobi-Bellman-Isaacs (HJBI) equations. Consequently, even when existence and characterization results are available\cite{buckdahn2008stochastic}, the resulting equilibrium PDE systems are often analytically intractable and numerically challenging. To avoid grid-based computation, \cite{cohen2017nash} maps infinite-horizon ergodic games to a system of coupled Ergodic BSDEs which still necessitate complex forward-backward solvers.  For a risk-sensitive ergodic setup, the solution of the coupled HJBs is characterized in  \cite{ghosh2023nonzero} via a multi-parameter eigenvalue problem.

One of the main exceptions to this general picture is the linearly solvable or Kullback--Leibler (KL) control framework. In a single agent control problem, \cite{todorov2006linearly} formulated a Markov Decision Process in the discrete-time setting, where control is modeled as a change of transition probabilities penalized by a KL divergence, the Bellman equation becomes linear after an exponential desirability transformation. A corresponding continuous-time version of the stochastic optimal control problem was developed in \cite{kappen2005linear} where the resultant non-linear HJB was linearlizable by a similar non-linear transform (namely Cole-Hopf Transformation) which allows for the transformed linear problem to admit Monte Carlo simulations of Path Integrals. An explicit connection was established between the MDPs and the Path Integral problem in \cite {theodorou2012relative}. Subsequent work extended this line of research to various game-theoretic settings. For instance, a KL cost based Markov game in an adversarial zero-sum discrete time setting is  modeled in \cite{dvijotham2012linearly}, where the resultant Bellman Equation was shown to be linearizable without needing a change of variables.  In multi-agent settings, linearly solvable structure has been established in special classes such as mean-field traffic routing games \cite{tanaka2020linearly} where the equilibrium for the mean-field game in the discrete time setting was shown equivalent to a single Bellman equation which was linearized using a Cole-Hopf transformation. Another recent work \cite{soper2024linearly} expanded on the idea of using the log-transform to linearize a general-sum discrete time game, where the costs are KL costs and the players control the probabilistic transitions of passive dynamics of a common underlying MDP. In a continuous-time setting, a two-player zero-sum stochastic differential game is modeled that is linearly solvable via the path-integral control method \cite{patil2023risk}. To the best of our knowledge, there is no general-sum continuous-time stochastic differential game formulation that is linearly solvable via the Path Integral approach.

Motivated by the success of path integral approach in computational advances of the HJB equation, this paper introduces a class of nonlinear stochastic general-sum differential games for a finite number of heterogeneous players that become linearly solvable via the Path integral approach. Through an equivalent information theoretic representation, the proposed setup models a measure-theoretic planning game in which players select controlled probability distributions over their trajectories. Each player's objective balances individual costs, KL divergence from baseline distributions, and cross-log-likelihood terms coupling the agent's measures. Broadly, these cross terms regulate interactions over shared resources, driving emergent behaviors that range from mutual resource partitioning to aggregation, and more generally to asymmetric interactions when pairwise couplings are not symmetric. When formulated to penalize distributional overlap, a practical application of this mechanism is congestion avoidance. There are many ways of modeling congestion avoidance explored in the literature, primarily encoding interactions through macroscopic density effects or route/lane occupancy \cite{pedram2019linearly,festa2018mean}, or through pairwise geometric separation and proximity penalties \cite{mylvaganam2017differential}, or barrier-function constraints \cite{pereira2022decentralized}. The cross-log-likelihood structure in our formulation penalizes overlap of the agents' probability measures directly: an agent incurs a high cost for assigning probability mass to trajectories that other agents also heavily favor, while being biased towards available cost-effective reference distributions. Congestion is therefore resolved at the distributional planning stage, leading to emergent distributional separation and proactive congestion avoidance while preserving exact linearizability of the coupled HJB system.

The paper is organized as follows: We formulate the measure-theoretic general-sum game with cross-log-likelihood interactions, establish its equivalence to a nonlinear stochastic differential game, and derive the coupled HJB equations for feedback Nash equilibrium. We then introduce a multivariate Cole-Hopf transformation that decouples and linearizes the entire system, enabling solution via forward Monte Carlo sampling through the Feynman-Kac formula. Finally, we validate the framework on an asymmetric multi-player collision-avoidance scenario demonstrating emergent distributional separation among the agents. The next section formalizes the game and its equivalent stochastic differential-game representation.

\section{Problem Formulation}

We consider a $N$-player continuous-time dynamic general-sum game over a finite time horizon $[0, T]$. Each player deploys a team of identical microscopic agents to a common state space $\mathcal{X} = \mathbb{R}^n$ and let $\Omega := \mathcal{C}([0,T], \mathbb{R}^n)$ denote the path space of continuous state trajectories. We assume that the dynamics of different microscopic agents are decoupled. Let the state for each agent $x_t \in \mathcal{X}$ follow the Ito stochastic differential equation (SDE):
\begin{align}
    d x_t = f(x_t)\,dt + g(x_t)\,d v_t, \qquad 0 \le t \le T \label{eq:base_dyanmics}
\end{align}
where $v_t \in \mathbb{R}^m$ is an exogenous input containing both the control drift and stochastic disturbances. We assume that $f: \mathbb{R}^n \rightarrow \mathbb{R}^n$ and $g: \mathbb{R}^n \rightarrow \mathbb{R}^{n \times m}$ are sufficiently regular to ensure the existence of a strong unique solution \cite{oksendal2003stochastic} to \eqref{eq:base_dyanmics}. All agents in the same team are driven by the same control law, but individually they are subject to independent stochastic disturbances. For agents belonging to player $i$'s team, where $i \in \mathcal{N} := \{1, \dots, N\}$, we first describe the controlled dynamics under player $i$'s chosen probability measure $P^i$ on $\Omega$. Under $P^i$, the input process evolves as
\begin{align}
    d v_t = u_t^i\,dt + d w_t^i,
    \label{eq:controlled_dyamics}
\end{align}
where $u_t^i$ is the feedback control selected by player $i$, and $w_t^i$ is a $m$-dimensional standard Wiener process under $P^i$.

Next, let $\bar u_t^i$ denote a nominal feedback policy for player $i$, and define the process $\bar w_t^i$ by
\begin{align}
    d\bar w_t^i := (u_t^i - \bar u_t^i)\,dt + d w_t^i
    \label{eq:shifted_noise}
\end{align}
Under a mild Novikov condition \cite{oksendal2003stochastic}, there exists a reference (or baseline) probability measure $R^i$ on $\Omega$ under which $\bar w_t^i$ is a $m$-dimensional standard Wiener process such that $P^i$ is absolutely continuous with respect to $R^i$ (denoted by $P^i \ll R^i$). Equivalently, under the reference measure $R^i$, the input admits the representation
\begin{align}
    d v_t = \bar u_t^i\,dt + d\bar w_t^i
    \label{eq:nominal_input}
\end{align}
In this sense, player $i$'s strategy may be understood either as the choice of a controlled measure $P^i$ or, equivalently, as the choice of a feedback control $u_t^i$ relative to the nominal pair $(R^i,\bar u_t^i)$. Let $\boldsymbol{P} = (P^1, \dots, P^N)$ denote the joint strategy profile. For a given profile $\boldsymbol{P}$, Player $i$ incurs the cost:
\begin{align}
    J^i(\boldsymbol{P}) :=  \mathbb{E}_{P^i}\!\left[ \int_0^T C_t^i(x_t)\,dt + \Psi_i(x_T) \right] +  \nonumber \\ \alpha_{ii}\,\mathbb{E}_{P^i}\!\left[ \log\frac{dP^i}{dR^i}(\omega) \right] + \sum_{j \neq i} \alpha_{ij}\,\mathbb{E}_{P^i}\!\left[ \log\frac{dP^j}{dR^j}(\omega) \right] \label{eq:distribution_game}
\end{align}
where $dP^i/dR^i$ is the Radon--Nikodym
derivative, $\omega \in \Omega$ represents a sample trajectory, $x_t(\omega)$ is the state at time $t$, $C_t^i : \mathcal{X} \to \mathbb{R}$ is the running cost, and $\Psi_i : \mathcal{X} \to \mathbb{R}$ is the terminal cost. Consequently, given $P^{-i}:=(P^1,\dots,P^{i-1},P^{i+1},\dots,P^N)$, the player $i$ solves
\[
\inf_{P^i \ll R^i} J^i(P^i,P^{-i})
\] The weighting parameters $\alpha_{ij}$ represent the interactions between the players. We define the interaction matrix $\alpha \in \mathbb{R}^{N\times N}$ and its inverse as
\begin{align}
    \alpha =
\begin{bmatrix}
    \alpha_{11} & \cdots & \alpha_{1N}\\
    \vdots      & \ddots & \vdots\\
    \alpha_{N1} & \cdots & \alpha_{NN}
\end{bmatrix},
\qquad \alpha_{ii} > 0,
\qquad \beta := \alpha^{-1} \label{eq:alpha_matrix}
\end{align}
where we assume $\alpha$ is non-singular. The objective functional encapsulates three primary behaviors. The first term  represents the standard expected trajectory cost. The second term is a self-KL divergence that penalizes player $i$'s deviation from its nominal plan, effectively acting as a control effort penalty.
This third term couples players through the log-likelihood ratios, $dP^j/dR^j$, which measures how heavily player $j$ targets a trajectory relative to its baseline $R^j$. For repulsive interactions ($\alpha_{ij} > 0$), player $i$ minimizes cost by avoiding trajectories where this ratio is high (strategies heavily favored by $j$) and shifting toward trajectories where it is low (cost-effective strategies of $R^j$ vacated by $j$). Intuitively, this structure drives proactive conflict avoidance (aggregation if $\alpha_ij <0$) while biasing these evasion strategies towards efficient nominal plans. Since the interaction matrix is not restricted to be symmetric, these incentives also capture asymmetric objectives. A similar coupling cost was considered in \cite{pedram2019linearly} as a tax to induce congestion avoidance behavior.

\begin{prob}
Find a Nash Equilibrium strategy profile $\boldsymbol{P}^* = (P_1^*, \dots, P_N^*)$ for game \eqref{eq:distribution_game} such that, for all players $i \in \mathcal{N}$ and any admissible measure $P^i \ll R^i$, the following inequality holds:
\begin{equation}
    J^i(P^{i*}, P^{-i*}) \leq J^i(P^i, P^{-i*}),
    \label{eq:nash_inequality}
\end{equation}
given the fixed reference measures $R^i$, the interaction matrix $\alpha$ in \eqref{eq:alpha_matrix}, and where strategies $P^i$ are induced by closed-loop state feedback policies.
\end{prob}

\section{MAIN RESULTS}
\label{sec:main_results}

We now state the first result, which translates the abstract measure-theoretic game \eqref{eq:distribution_game} into an equivalent nonlinear stochastic differential game with explicit control costs.

\begin{thm}
\label{thm:problem_equivalence}
Subject to the dynamics \eqref{eq:base_dyanmics}--\eqref{eq:controlled_dyamics}, the Measure-theoretic game given by \eqref{eq:distribution_game} is equivalent to the following stochastic differential game, where each player $i \in \mathcal{N}$ minimizes:

\begin{align}
    \min_{u^i}\; &\mathbb{E}_{P^i}\! \Bigg[ \int_0^T \Bigg( C_t^i(x_t) + \frac{\alpha_{ii}}{2} \|u_t^i - \bar{u}_t^i\|^2 \nonumber \\
    &+ \sum_{j \neq i} \Big\{ \frac{\alpha_{ij}}{2}(\|\bar{u}_t^j\|^2 - \|u_t^j\|^2) + \alpha_{ij} (u_t^j - \bar{u}_t^j)^\top u_t^i \Big\}  \Bigg) dt \nonumber \\
    &+ \Psi_i(x_T) \Bigg]
    \label{eq:equivalent_cost}
\end{align}
\end{thm}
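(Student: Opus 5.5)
The plan is to compute every log-likelihood term in \eqref{eq:distribution_game} explicitly using Girsanov's theorem, and then evaluate its expectation under $P^i$. Both games use the same strategy variables: a feedback law $u^i$ determines $P^i$ through \eqref{eq:controlled_dyamics}, and conversely. So if the two cost functionals agree for every strategy profile, their Nash equilibria in the sense of \eqref{eq:nash_inequality} coincide. Throughout, $x_t(\omega)$ is the strong solution of \eqref{eq:base_dyanmics} driven by the input path $v$. Every feedback law $u^j_t=u^j(t,x_t)$ and $\bar u^j_t$ is therefore a functional of the same sample path $\omega$, whichever measure is placed on it. This is what allows $dP^j/dR^j$ to be evaluated along trajectories generated under $P^i$.

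\textbf{Step 1 (Radon--Nikodym derivative).} Fix $j$. Under $R^j$ we have $dv_t-\bar u^j_t\,dt=d\bar w^j_t$, a standard Wiener process, while $P^j$ adds the drift $u^j_t-\bar u^j_t$ by \eqref{eq:shifted_noise}. Under the Novikov condition, Girsanov's theorem gives, pathwise,
\begin{align*}
\log\frac{dP^j}{dR^j}(\omega)=\int_0^T (u^j_t-\bar u^j_t)^\top (dv_t-\bar u^j_t\,dt)-\frac12\int_0^T\|u^j_t-\bar u^j_t\|^2dt .
\end{align*}
The stochastic integral is written against the canonical input $v$, not against $\bar w^j$, so the expression is valid under any of the measures.

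\textbf{Step 2 (expectation under $P^i$).} Under $P^i$ we substitute $dv_t=u^i_t\,dt+dw^i_t$. The term $\int (u^j-\bar u^j)^\top dw^i$ is a $P^i$-martingale and has zero expectation; this needs $\mathbb{E}_{P^i}\int\|u^j-\bar u^j\|^2dt<\infty$, which I take as part of admissibility. The integrand that remains is
\begin{align*}
(u^j-\bar u^j)^\top(u^i-\bar u^j)-\tfrac12\|u^j-\bar u^j\|^2
=(u^j-\bar u^j)^\top u^i+\tfrac12\big(\|\bar u^j\|^2-\|u^j\|^2\big),
\end{align*}
which follows by expanding both quadratics. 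For $j=i$ the same expression collapses to $\tfrac12\|u^i-\bar u^i\|^2$, so the self term is the usual KL control cost.

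\textbf{Step 3.} Multiply by $\alpha_{ij}$, sum over $j$, and add the trajectory cost. This gives exactly \eqref{eq:equivalent_cost}, and therefore $J^i(\boldsymbol P)$ equals the differential-game cost for every profile, which proves the equivalence. The main obstacle is the rigor of Step 1–2 for the cross terms $j\neq i$. There, $dP^j/dR^j$ must be well defined and finite $P^i$-a.s., so we need $P^i\ll R^j$ (mutual absolute continuity of all the Girsanov measures with the measure of the uncontrolled input), together with the martingale property of the stochastic integral under $P^i$. I would handle both by assuming bounded or Novikov-type controls, under which all the measures are equivalent and the integrands are square integrable.
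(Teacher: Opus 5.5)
Your proof is correct and follows the paper's argument. You apply Girsanov to each $\log(dP^j/dR^j)$, substitute $dv_t = u^i_t\,dt + dw^i_t$ under $P^i$, drop the $dw^i$-integral by the martingale property, and expand the quadratics. The paper treats $j=i$ separately, while your single formula written against the canonical input $v$ covers it as a special case; your caveats on $P^i \ll R^j$ and on square-integrability of $u^j-\bar u^j$ under $P^i$ make explicit assumptions the paper uses tacitly.
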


\begin{proof}
    The proof is structured in two parts: determining the explicit control cost equivalent to the self-KL divergence term, and then evaluating the cross divergence term under the measure $P^i$.
    
    \textit{Step 1: Control Cost due to Self-KL Divergence.} 
    Using Girsanov's theorem \cite{oksendal2003stochastic}, and \eqref{eq:shifted_noise} the Radon-Nikodym derivative of $P^i$ with respect to $R^i$ is given by:
    \begin{equation}
        \log\frac{dP^i}{dR^i}(\omega) = \int_0^T (u_t^i - \bar{u}_t^i)^\top d w_t^i + \frac{1}{2}\int_0^T \|u_t^i - \bar{u}_t^i\|^2\,dt
        \label{eq:rn_self}
    \end{equation}
    Taking the expectation under $P^i$, the stochastic integral with respect to $w_t^i$ vanishes due to the martingale property, yielding:
    \begin{equation}
        \mathbb{E}_{P^i}\!\left[\log\frac{dP^i}{dR^i}(\omega)\right] = \frac{1}{2}\,\mathbb{E}_{P^i}\!\left[\int_0^T \|u_t^i - \bar{u}_t^i\|^2\,dt\right]
        \label{eq:self_kl_result}
    \end{equation}
    This indicates that the relative entropy cost is equivalent to the mean-square deviation from the nominal input $\bar u_t^i$.

    \textit{Step 2: Coupling Cost due to Cross-Log-Likelihood.}
    For any player $j \neq i$, the log Radon-Nikodym derivative between their controlled measure $P^j$ and the reference measure $R^j$ is derived analogously to \eqref{eq:rn_self}, utilizing the standard Wiener process $d\bar{w}_t^j$ under $R^j$:
    \begin{equation}
        \log\frac{dP^j}{dR^j}(\omega) = \int_0^T (u_t^j - \bar{u}_t^j)^\top d\bar{w}_t^j - \frac{1}{2}\int_0^T \|u_t^j - \bar{u}_t^j\|^2\,dt
        \label{eq:rn_cross_initial}
    \end{equation}
    Under the expectation of player $i$'s measure $P^i$, the system evolves according to $d v_t = u_t^i\,dt + d w_t^i$. We can rewrite $d\bar{w}_t^j$ in terms of player $i$'s processes:
    
    \begin{equation}
        d\bar{w}_t^j = d v_t - \bar{u}_t^j\,dt = (u_t^i - \bar{u}_t^j)\,dt + d w_t^i
        \label{eq:wiener_substitution}
    \end{equation}
    Substituting \eqref{eq:wiener_substitution} into \eqref{eq:rn_cross_initial} gives:
    \begin{align}
        \log\frac{dP^j}{dR^j}(\omega) &= \int_0^T (u_t^j - \bar{u}_t^j)^\top \big((u_t^i - \bar{u}_t^j)\,dt + d w_t^i\big) \nonumber \\
        &\quad - \frac{1}{2}\int_0^T \|u_t^j - \bar{u}_t^j\|^2\,dt
        \label{eq:rn_cross_subbed}
    \end{align}
    Taking the expectation under $P^i$, the stochastic integral $\int (\cdot)^\top dw_t^i$ vanishes. Consolidating the remaining terms inside the deterministic integral yields:
    \begin{align}
        &\mathbb{E}_{P^i}\!\left[\log\frac{dP^j}{dR^j}(\omega)\right] \nonumber \\
        &= \mathbb{E}_{P^i}\!\left[ \int_0^T \left( (u_t^j - \bar{u}_t^j)^\top (u_t^i - \bar{u}_t^j) - \frac{1}{2}\|u_t^j - \bar{u}_t^j\|^2 \right) dt \right] \nonumber \\
        &= \mathbb{E}_{P^i}\!\left[ \int_0^T \frac{1}{2} (u_t^j - \bar{u}_t^j)^\top (2u_t^i - u_t^j - \bar{u}_t^j)\,dt \right]
        \label{eq:cross_kl_result}
    \end{align}
    Substituting \eqref{eq:self_kl_result} and \eqref{eq:cross_kl_result} back into the original objective \eqref{eq:distribution_game} recovers the equivalent cost functional \eqref{eq:equivalent_cost}.
\end{proof}

\begin{rem}
\label{rem:control_equivalence}
Theorem \ref{thm:problem_equivalence} gives an equivalent parametrization of each player’s strategy: relative to a fixed nominal pair ($R^i, \bar u^i$), a feedback control $u
^i$ induces an absolutely continuous path measure $P^i$, and conversely, any admissible measure $P^i \ll R^i$ determines the associated unique drift adjustment. The coupling term $\alpha_{ij}\,\mathbb{E}_{P^i}[\log(dP^j/dR^j)]$ therefore expresses how player $i$ evaluates player $j$’s likelihood ratio on trajectories sampled from $P^i$; in the control representation, this becomes the explicit cross term in $u^i$ and $u^j$, so the interaction is at the level of relative measure changes on the common path space.
\end{rem}

Before analyzing the solutions, we characterize the Feedback Nash Equilibrium via the coupled Hamilton-Jacobi-Bellman (HJB) equations. 

\begin{lem}
\label{lem:coupled_hjb}
Consider the stochastic differential game defined in Theorem \ref{thm:problem_equivalence} subject to the dynamics \eqref{eq:base_dyanmics}--\eqref{eq:controlled_dyamics}. Let $J^i(t,x)$ be Player $i$'s value function on $[0,T] \times \mathcal{X}$. Then, the Feedback Nash Equilibrium is governed by the following system of coupled nonlinear HJB equations for $i \in \{1, \dots, N\}$:
\begin{align}
    &-\partial_t J^i = C_t^i(x) + (f(x)+\bar u^i)^\top \nabla_x J^i + \nonumber \\
    &\frac{1}{2} \mathrm{Tr}\left(g(x) g(x)^\top \nabla_{xx}^2 J^i\right) \nonumber \\
     &- \frac{1}{2} \nabla \boldsymbol{J}^\top (\beta^\top \otimes g(x))\, \mathrm{diag}_{1\le j\le N}(\alpha_{ij} I_m)\, (\beta \otimes g(x)^\top) \nabla \boldsymbol{J}, \label{eq:hjb_pde} \\
    &J^i(T,x) = \Psi_i(x), \label{eq:hjb_boundary}
\end{align}
where $\nabla \boldsymbol{J} := [\nabla_x {J^1}^\top, \dots, \nabla_x {J^N}^\top]^\top \in \mathbb{R}^{Nn}$ is the stacked gradient vector.
\end{lem}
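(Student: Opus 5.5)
The plan is the standard verification route: write player $i$'s dynamic programming equation with the other players' feedback laws frozen, solve the pointwise Hamiltonian minimization, close the resulting system of first-order conditions jointly for all players, and substitute back. By Theorem~\ref{thm:problem_equivalence} we may work with the control form \eqref{eq:equivalent_cost}. Under $P^i$ the state obeys $dx_t=(f(x_t)+g(x_t)u_t^i)\,dt+g(x_t)\,dw_t^i$ from \eqref{eq:base_dyanmics}--\eqref{eq:controlled_dyamics}. For fixed feedback laws $u^j(t,x)$, $j\neq i$, the value function $J^i$ should then satisfy
\begin{align*}
-\partial_t J^i = \min_{u^i}\Big\{ & C_t^i + \tfrac{\alpha_{ii}}{2}\|u^i-\bar u^i\|^2 + (f+gu^i)^\top\nabla_x J^i \\
&+ \textstyle\sum_{j\ne i}\big[\tfrac{\alpha_{ij}}{2}(\|\bar u^j\|^2-\|u^j\|^2)+\alpha_{ij}(u^j-\bar u^j)^\top u^i\big]\Big\} + \tfrac12\mathrm{Tr}(gg^\top\nabla^2_{xx}J^i),
\end{align*}
with terminal condition \eqref{eq:hjb_boundary}. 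Because $\alpha_{ii}>0$, the bracket is strictly convex in $u^i$, so the first-order condition characterizes the unique minimizer. This is the step where I would invoke a standard verification theorem, assuming enough smoothness and growth of $J^i$ and using It\^o's formula.

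Next I set $\delta^j:=u^j-\bar u^j$. The first-order condition for player $i$ reads $\sum_{j}\alpha_{ij}\delta^j=-g^\top\nabla_x J^i$. At a feedback Nash equilibrium all $N$ conditions hold simultaneously, so stacking them gives $(\alpha\otimes I_m)\boldsymbol\delta=-(I_N\otimes g^\top)\nabla\boldsymbol J$. Since $\alpha$ is nonsingular, this yields the explicit equilibrium feedback
\[
\boldsymbol\delta=-(\beta\otimes g^\top)\nabla\boldsymbol J, \qquad\text{i.e.}\qquad \delta^i=-\textstyle\sum_k\beta_{ik}\,g^\top\nabla_x J^k .
\]

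Then I substitute back. I write $gu^i=g\bar u^i+g\delta^i$ and expand the cross terms as $\alpha_{ij}\big[\delta^{j\top}(\delta^i+\bar u^i-\bar u^j)-\tfrac12\|\delta^j\|^2\big]$. The first-order condition lets me replace $\delta^{i\top}g^\top\nabla_xJ^i$ by $-\sum_j\alpha_{ij}\delta^{j\top}\delta^i$. After this, the $\delta^{i\top}\delta^j$ terms cancel, and the whole minimized expression collapses to
\[
-\tfrac12\textstyle\sum_j\alpha_{ij}\|\delta^j\|^2 = -\tfrac12\nabla\boldsymbol J^\top(\beta^\top\otimes g)\,\mathrm{diag}_j(\alpha_{ij}I_m)\,(\beta\otimes g^\top)\nabla\boldsymbol J .
\]
The quadratic form is obtained by inserting the stacked expression for $\boldsymbol\delta$. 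What remains is $C^i_t+(f+g\bar u^i)^\top\nabla_xJ^i$ plus the diffusion term, which is \eqref{eq:hjb_pde}; the drift $\bar u^i$ in \eqref{eq:hjb_pde} should be read as $g\bar u^i$.

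The main obstacle is the residual term $\sum_{j\ne i}\alpha_{ij}\delta^{j\top}(\bar u^i-\bar u^j)$, which does not appear in \eqref{eq:hjb_pde}. I would handle it by making explicit that the nominal policies coincide, $\bar u^i=\bar u^j$; this holds, for instance, when all $\bar u^i$ vanish, so every $R^i$ is the same Wiener reference. Otherwise the term has to be carried along as an extra linear-in-$\nabla\boldsymbol J$ drift. A secondary, more routine point is justifying verification: one must confirm that the equilibrium feedback $\boldsymbol\delta$ is admissible (Novikov's condition) and that smooth solutions of the coupled system give each player a best response against the others' frozen feedback laws.
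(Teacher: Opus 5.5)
Your argument follows the paper's proof step for step. Both write player $i$'s dynamic-programming equation with the others' feedbacks frozen, use strict convexity from $\alpha_{ii}>0$, invert the stacked first-order condition $(\alpha\otimes I_m)\boldsymbol\delta=-(I_N\otimes g^\top)\nabla\boldsymbol J$ with $\beta\otimes I_m$, and substitute back.

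Your algebra is correct, and the residual $\sum_{j\ne i}\alpha_{ij}(u^j-\bar u^j)^\top(\bar u^i-\bar u^j)$ that you flag is genuine. The paper only asserts that plugging \eqref{eq:optimal_u_stacked} into \eqref{eq:hjb_pre_min} ``directly yields'' \eqref{eq:hjb_pde}. That is true in general only when $\bar u^i=\bar u^j$ for every coupled pair ($\alpha_{ij}\neq 0$), as in the simulations where $\bar u^i\equiv 0$, and with $\bar u^i$ read as $g\bar u^i$. So your added hypothesis repairs a real omission in the paper rather than exposing a gap in your proof.

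Of your two options, carrying the term along is the one consistent with Theorem~\ref{thm:linear_pde_equivalence}. Under \eqref{eq:cole_hopf} one has $u^j-\bar u^j=g^\top\nabla_x\log Z^j$. The drift and the residual then combine into $-\sum_j\alpha_{ij}(f+g\bar u^j)^\top\nabla_x\log Z^j$. Left-multiplying by $\beta$ decouples this into exactly the drift $f+g\bar u^i$ of \eqref{eq:linear_pde}. By contrast, the residual-free equation \eqref{eq:hjb_pde} would leave first-order cross terms among the $Z^j$ whenever the $\bar u^i$ differ. The paper's \eqref{eq:pre_linear_matrix} hides this by dropping $\bar u$ altogether.
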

\begin{proof}
    Applying the dynamic programming principle to the equivalent cost in Theorem \ref{thm:problem_equivalence}, the HJB equation for Player $i$ is given by:
    \begin{align}
        -\partial_t J^i &= \min_{u^i} \Bigg[ C_t^i(x) + \sum_{j=1}^N \frac{\alpha_{ij}}{2} (u^j-\bar u_t^j)^\top (2u^i - u^j -\bar u_t^j) \nonumber \\
        &\qquad\quad + (f(x) + g(x)u^i)^\top \nabla_x J^i \nonumber \\
        &\qquad\quad + \frac{1}{2} \mathrm{Tr}\left(g(x) g(x)^\top \nabla_{xx}^2 J^i\right) \Bigg],
        \label{eq:hjb_pre_min}
    \end{align}
    with the terminal condition $J^i(T,x) = \Psi_i(x)$.

    Since $\alpha_{ii}>0$, the function on the right-hand side of \eqref{eq:hjb_pre_min} is convex in $u^i$. Taking the derivative of the RHS with respect to $u^i$ yields the first-order necessary optimality condition:
    \begin{equation}
        \sum_{j=1}^N \alpha_{ij} (u^j -\bar u_t^j) + g(x)^\top \nabla_x J^i = 0, \qquad i=1,\dots,N.
        \label{eq:hjb_foc}
    \end{equation}
    Stacking the conditions from \eqref{eq:hjb_foc} for all $N$ players into a single linear system allows us to solve for the optimal feedback policies collectively. For brevity, we occasionally omit the explicit state dependence $(x)$ for the drift $f$ and diffusion $g$. Defining the stacked optimal control $\boldsymbol{u}^* = [(u^{1*} - \bar u^1)^\top, \dots, (u^{N*} -\bar u^N)^\top]^\top$, we have $(\alpha \otimes I_m) \boldsymbol{u}^* = - (I_N \otimes g^\top) \nabla \boldsymbol{J}$. Multiplying by $\beta \otimes I_m$ yields the explicit equilibrium strategies:
    \begin{equation}
        \begin{bmatrix}
            u^{1*} -\bar u^1 \\ \vdots \\ u^{N*} - \bar u^N
        \end{bmatrix}
        = - (\beta \otimes g^\top)
        \begin{bmatrix}
            \nabla_x J^1 \\ \vdots \\ \nabla_x J^N
        \end{bmatrix}
        \label{eq:optimal_u_stacked}
    \end{equation}
    Plugging $\boldsymbol{u}^*$ from \eqref{eq:optimal_u_stacked} back into \eqref{eq:hjb_pre_min} directly yields the explicit coupled nonlinear PDEs in \eqref{eq:hjb_pde}.
\end{proof}

We now state the second main result, relating the solution of the coupled nonlinear HJB PDEs in Lemma \ref{lem:coupled_hjb} to a system of decoupled linear equations. To achieve this, we first introduce a change of variables.

\begin{defn}[Multivariate Cole-Hopf Transformation]
\label{def:cole_hopf}
Let $Z_t^i(x)$ be the transformed desirability function for player $i$. We define the transformation mapping the value functions $J_t^i(x)$ to $Z_t^i(x)$ as:
\begin{equation}
    \begin{bmatrix}
        J_t^1(x)\\
        \vdots\\
        J_t^N(x)
    \end{bmatrix}
    = -\alpha
    \begin{bmatrix}
        \log Z_t^1(x)\\
        \vdots\\
        \log Z_t^N(x)
    \end{bmatrix}
    \label{eq:cole_hopf}
\end{equation}
Equivalently, multiplying by $\beta = \alpha^{-1}$, we have $Z_t^i(x) = \exp\!\left(-\sum_{j=1}^N \beta_{ij} J_t^j(x)\right)$.
\end{defn}

\begin{thm}
\label{thm:linear_pde_equivalence}
Under the transformation \eqref{eq:cole_hopf}, the system of coupled nonlinear HJB PDEs \eqref{eq:hjb_pde} with terminal conditions \eqref{eq:hjb_boundary} is equivalent to the following system of decoupled linear PDEs for each player $i \in \mathcal{N}$:
\begin{align}
    -\partial_t Z_t^i &= (f(x)+\bar u^ig(x))^\top \nabla_x Z_t^i + \frac{1}{2} \mathrm{Tr}\big(g(x) g(x)^\top \nabla_{xx}^2 Z_t^i\big) \nonumber \\
    &\quad - \left(\sum_{j=1}^N \beta_{ij} C_t^j(x)\right) Z_t^i, \label{eq:linear_pde} \\
    Z_T^i(x) &= \exp\!\left(-\sum_{j=1}^N \beta_{ij}\Psi_j(x)\right)
    \label{eq:linear_boundary}
\end{align}
\end{thm}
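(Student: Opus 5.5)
The plan is a direct substitution of the multivariate Cole--Hopf transformation of Definition~\ref{def:cole_hopf} into the coupled HJB system \eqref{eq:hjb_pde}--\eqref{eq:hjb_boundary} of Lemma~\ref{lem:coupled_hjb}. I then verify that the quadratic gradient coupling cancels exactly against the Itô correction coming from the Hessian. Because $\alpha$ is nonsingular and $Z^i=\exp(-\sum_j\beta_{ij}J^j)>0$, the map $\boldsymbol J\leftrightarrow(\log Z^1,\dots,\log Z^N)$ is a smooth bijection. Hence showing that the substitution turns one system into the other, with matching terminal data, gives the claimed equivalence. The terminal condition is immediate: $J^i(T,x)=\Psi_i(x)$ gives $Z_T^i=\exp(-\sum_j\beta_{ij}\Psi_j)$, which is \eqref{eq:linear_boundary}.

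\textbf{Step 1 (derivatives).} From $J^i=-\sum_k\alpha_{ik}\log Z^k$ I write
\begin{align*}
\partial_t J^i&=-\textstyle\sum_k\alpha_{ik}\,\partial_t Z^k/Z^k, \qquad \nabla_x J^i=-\textstyle\sum_k\alpha_{ik}\,\nabla_x Z^k/Z^k,\\
\nabla^2_{xx}J^i&=-\textstyle\sum_k\alpha_{ik}\big(\nabla^2_{xx}Z^k/Z^k-\nabla_xZ^k\nabla_xZ^{k\top}/(Z^k)^2\big).
\end{align*}
In stacked form, $\nabla\boldsymbol J=-(\alpha\otimes I_n)\boldsymbol\zeta$, where $\boldsymbol\zeta$ stacks the vectors $\nabla_xZ^k/Z^k$. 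The key identity is then $(\beta\otimes g^\top)\nabla\boldsymbol J=-(I_N\otimes g^\top)\boldsymbol\zeta$, since $\beta\alpha=I$. Consequently the quadratic term in \eqref{eq:hjb_pde} collapses to $-\tfrac12\sum_j\alpha_{ij}\|g^\top\nabla_xZ^j\|^2/(Z^j)^2$. This also shows that the equilibrium feedback \eqref{eq:optimal_u_stacked} becomes $u^{j*}-\bar u^j=g^\top\nabla_x Z^j/Z^j$.

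\textbf{Step 2 (cancellation and decoupling).} The trace term produces $-\tfrac12\sum_k\alpha_{ik}\mathrm{Tr}(gg^\top\nabla^2Z^k)/Z^k+\tfrac12\sum_k\alpha_{ik}\|g^\top\nabla Z^k\|^2/(Z^k)^2$. The second piece cancels the quadratic term from Step~1 exactly, with the same coefficients $\alpha_{ik}$. What remains is an equation of the form $\sum_k\alpha_{ik}\,\mathcal{E}^k=C_t^i$, where
\begin{equation*}
\mathcal{E}^k:=\frac{1}{Z^k}\Big(\partial_tZ^k+(f+g\bar u^{k})^\top\nabla Z^k+\tfrac12\mathrm{Tr}(gg^\top\nabla^2Z^k)\Big).
\end{equation*}
Multiplying the stacked system by $\beta$ gives $\mathcal{E}^i=\sum_j\beta_{ij}C_t^j$ for each $i$. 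Multiplying through by $Z^i$ then yields \eqref{eq:linear_pde}. All steps are reversible, which gives the converse direction.

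\textbf{Main obstacle.} The delicate point is the nominal drift. In \eqref{eq:hjb_pde} the drift multiplying $\nabla J^i$ carries $\bar u^i$, whereas after substitution $\nabla J^i$ mixes all $\nabla Z^k$. The decoupled form needs each $\nabla Z^k$ paired with its own $\bar u^k$. I would re-derive this linear-in-$\bar u$ term carefully from \eqref{eq:hjb_pre_min}, keeping the cross terms $\alpha_{ij}(u^j-\bar u^j)^\top u^i$ rather than only the deviation variables. The aim is to check that, after inserting $u^{j*}$, the terms linear in $\nabla Z^k$ regroup as $g\bar u^k$, so that the factor $\beta$ acts cleanly. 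If this regrouping fails in general, I would fall back on the case of a common nominal policy, $\bar u^i\equiv\bar u$. In that case the drift is player-independent and the argument above goes through verbatim.
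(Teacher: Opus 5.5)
You are following the paper's route: differentiate the Cole--Hopf map, cancel the quadratic gradient term against the $\nabla_xZ^k\nabla_xZ^{k\top}/(Z^k)^2$ part of the Hessian, and left-multiply by $\beta$. The nominal-drift issue you raise is real, and the paper's proof does not resolve it either. Its intermediate system \eqref{eq:pre_linear_matrix} carries only $(\alpha\otimes f^\top)$, and $g\bar u^i$ reappears only in the final PDE.

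Suppose you start from \eqref{eq:hjb_pde} as printed, whose only first-order term is $(f+g\bar u^i)^\top\nabla_xJ^i$. Then your Step~2 claim $\sum_k\alpha_{ik}\mathcal{E}^k=C_t^i$ is false whenever the $\bar u^k$ differ. There is a leftover term $\sum_k\alpha_{ik}\bigl(g(\bar u^i-\bar u^k)\bigr)^\top\nabla_xZ^k/Z^k$. It does not vanish after multiplying by $\beta$, and it leaves a nonlinear coupling between $Z^i$ and the other $Z^k$.

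The remedy you propose does settle this, so you do not need the common-$\bar u$ fallback. Set $\delta^j:=u^{j*}-\bar u^j$ and substitute $2u^i-u^j-\bar u^j=2\delta^i-\delta^j+2(\bar u^i-\bar u^j)$ into \eqref{eq:hjb_pre_min}. The first-order condition $g^\top\nabla_xJ^i=-\sum_j\alpha_{ij}\delta^j$ then cancels $\sum_j\alpha_{ij}\delta^{j\top}\delta^i$ against the term $\delta^{i\top}g^\top\nabla_xJ^i$ coming from $(f+gu^i)^\top\nabla_xJ^i$. The minimized right-hand side is
\begin{align*}
&C_t^i+(f+g\bar u^i)^\top\nabla_xJ^i+\tfrac12\mathrm{Tr}(gg^\top\nabla_{xx}^2J^i)\\
&-\tfrac12\sum_j\alpha_{ij}\|\delta^j\|^2+\sum_j\alpha_{ij}\,\delta^{j\top}(\bar u^i-\bar u^j),
\end{align*}
and its last term is missing from \eqref{eq:hjb_pde}.

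Now insert $\delta^j=g^\top\nabla_xZ^j/Z^j$ and $\nabla_xJ^i=-\sum_j\alpha_{ij}\nabla_xZ^j/Z^j$. That last term combines with $(f+g\bar u^i)^\top\nabla_xJ^i$ to give $-\sum_j\alpha_{ij}(f+g\bar u^j)^\top\nabla_xZ^j/Z^j$. This is exactly the pairing your $\mathcal{E}^j$ require.

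The rest of your Step~2 then goes through verbatim. It yields \eqref{eq:linear_pde} with drift $f+g\bar u^i$ for arbitrary heterogeneous nominal policies. The key point is to start from \eqref{eq:hjb_pre_min}, not from \eqref{eq:hjb_pde} as printed.
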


\begin{proof}
    Differentiating the transformation \eqref{eq:cole_hopf} yields the following relations:
    \begin{align}
        \partial_t J^i &= -\sum_{j=1}^N \alpha_{ij} \frac{\partial_t Z^j}{Z^j}, \label{eq:dt_J} \\
        \nabla_x J^i &= -\sum_{j=1}^N \alpha_{ij} \frac{\nabla_x Z^j}{Z^j}, \label{eq:dx_J} \\
        \nabla_{xx}^2 J^i &= \sum_{j=1}^N \alpha_{ij} \left( \frac{\nabla_x Z^j (\nabla_x Z^j)^\top}{(Z^j)^2} - \frac{\nabla_{xx}^2 Z^j}{Z^j} \right) \label{eq:dxx_J}
    \end{align}
    
    Substituting \eqref{eq:dt_J}-\eqref{eq:dxx_J} into the HJB equation \eqref{eq:hjb_pde}, we observe that the term $\frac{1}{2}\mathrm{Tr}(gg^\top \nabla_{xx}^2 J^i)$ generates both linear terms (involving $\nabla_{xx}^2 Z^j$) and quadratic terms (involving $\nabla_x Z^j (\nabla_x Z^j)^\top$). 
    
    By the definition of the interaction matrix $\alpha$ and the equilibrium control structure, the explicit nonlinear cross-coupling term in the HJB \eqref{eq:hjb_pde} exactly cancels the quadratic terms generated by the Hessian of the logarithm. After this cancellation, the system reduces to:
    
    \begin{equation}
    \resizebox{\columnwidth}{!}{%
        $\alpha
        \begin{bmatrix}
            \frac{\partial_t Z^1}{Z^1} \\ \vdots \\ \frac{\partial_t Z^N}{Z^N}
        \end{bmatrix}
        = 
        \begin{bmatrix}
            C_t^1 \\ \vdots \\ C_t^N
        \end{bmatrix}
        - (\alpha \otimes f^\top)
        \begin{bmatrix}
            \frac{\nabla_x Z^1}{Z^1} \\ \vdots \\ \frac{\nabla_x Z^N}{Z^N}
        \end{bmatrix}
        - \frac{1}{2}\alpha
        \begin{bmatrix}
            \frac{\mathrm{Tr}(g g^\top \nabla_{xx}^2 Z^1)}{Z^1} \\ \vdots \\ \frac{\mathrm{Tr}(g g^\top \nabla_{xx}^2 Z^N)}{Z^N}
        \end{bmatrix}$
        }\label{eq:pre_linear_matrix}
    \end{equation}

    Left-multiplying \eqref{eq:pre_linear_matrix} by the inverse matrix $\beta = \alpha^{-1}$ perfectly decouples the system. Multiplying the $i$-th row by $Z_t^i$ yields the linear PDE \eqref{eq:linear_pde}. The terminal condition \eqref{eq:linear_boundary} directly follows from applying Definition \ref{def:cole_hopf} to the terminal cost $J_T^i(x) = \Psi_i(x)$.
\end{proof}

\begin{cor}[Feynman-Kac Path Integral Solution]
\label{cor:feynman_kac}
By the Feynman-Kac lemma, the solution to the linear PDE \eqref{eq:linear_pde} subject to the boundary condition \eqref{eq:linear_boundary} admits the following probabilistic path integral representation:
\begin{align}
    Z_t^i(x) = \mathbb{E}_{R_i}\!\Bigg[ &\exp\!\left(-\sum_{j=1}^N \beta_{ij}\int_t^T C_s^j(x_s)\,ds\right) \nonumber \\
    &\times \exp\!\left(-\sum_{j=1}^N \beta_{ij}\Psi_j(x_T)\right) \;\Bigg|\; x_t = x \Bigg],
    \label{eq:feynman_kac}
\end{align}
where the expectation is taken with respect to the base measure $R_i$ corresponding to the nominally controlled forward
\begin{equation}
    d x_s = f(x_s)\,ds + g(x_s)(\bar u_s^i +\,d \bar w_s^i)\label{eq:nominal_dynamics}
\end{equation}
\end{cor}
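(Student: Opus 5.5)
The plan is to apply the standard Feynman--Kac verification argument to the linear PDE \eqref{eq:linear_pde}, treating each player $i$ separately; Theorem \ref{thm:linear_pde_equivalence} has already decoupled the system. Fix $i$ and write $q^i_s(x) := \sum_{j=1}^N \beta_{ij} C_s^j(x)$ for the effective running cost. Write $\Phi^i(x) := \exp(-\sum_j \beta_{ij}\Psi_j(x))$ for the terminal data in \eqref{eq:linear_boundary}. I read the drift term in \eqref{eq:linear_pde} as $(f(x)+g(x)\bar u^i)^\top\nabla_x Z^i$. Then the operator acting on $Z^i$,
\[
\mathcal{L}^i Z := (f+g\bar u^i)^\top \nabla_x Z + \tfrac12 \mathrm{Tr}(gg^\top \nabla^2_{xx} Z),
\]
is exactly the infinitesimal generator of the nominal diffusion \eqref{eq:nominal_dynamics} under $R^i$. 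This identification follows from \eqref{eq:nominal_input}, since $dv_s=\bar u^i_s ds + d\bar w^i_s$ with $\bar w^i$ an $R^i$-Brownian motion. The PDE then reads $\partial_t Z^i + \mathcal{L}^i Z^i - q^i Z^i = 0$ with $Z^i_T=\Phi^i$, which is the canonical Feynman--Kac form.

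Assume $Z^i$ is a classical $C^{1,2}$ solution. Fix $(t,x)$ and consider the process
\[
M_s := \exp\!\Big(-\int_t^s q^i_r(x_r)\,dr\Big) Z^i_s(x_s), \qquad s\in[t,T],
\]
along the nominal dynamics started at $x_t=x$. I would apply It\^o's formula to the product of the discount factor and $Z^i_s(x_s)$. The $ds$-terms collect into $e^{-\int_t^s q^i}\,(\partial_s Z^i + \mathcal{L}^i Z^i - q^i Z^i)\,ds$, and this vanishes by \eqref{eq:linear_pde}. What remains is the stochastic integral $\int_t^s e^{-\int_t^r q^i}\,(\nabla_x Z^i)^\top g\,d\bar w^i_r$, so $M$ is a local martingale under $R^i$. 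If it is a true martingale, then $Z^i_t(x)=M_t=\mathbb{E}_{R^i}[M_T\mid x_t=x]$. Substituting $Z^i_T=\Phi^i$ and splitting the exponent gives \eqref{eq:feynman_kac}, with the factor $\exp(-\sum_j\beta_{ij}\int_t^T C^j_s\,ds)$ coming from the discount.

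The main obstacle is justifying the true-martingale property, together with uniqueness of the representation. In the game setting the potential $q^i$ need not be nonnegative, because $\beta$ can have negative entries. The discount factor is therefore not automatically bounded by one, and the usual hypotheses of Feynman--Kac (e.g.\ $q^i\ge 0$) fail. I would first try to impose that $C^j$ and $\Psi_j$ are bounded below and above on $[0,T]\times\mathcal X$. Then $q^i$ is bounded, the discount lies in $[e^{-KT},e^{KT}]$, and $\Phi^i$ is bounded. A localization by stopping times $\tau_R=\inf\{s: |x_s|\ge R\}\wedge T$ then applies, combined with a polynomial growth bound on $Z^i$ and moment bounds for the strong solution of \eqref{eq:nominal_dynamics}. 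Passing $R\to\infty$ by dominated convergence gives the identity. This is the standard route (e.g.\ Karatzas--Shreve, Thm.~5.7.6). The same argument also shows uniqueness of the solution within that growth class, so the expression in \eqref{eq:feynman_kac} is the desirability function. Finally, positivity of the representation guarantees $Z^i>0$, which makes $\log Z^i$ and hence the inverse Cole--Hopf map \eqref{eq:cole_hopf} well defined.
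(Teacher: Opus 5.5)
Your proposal is correct and follows essentially the paper's route. The paper simply invokes the Feynman--Kac lemma for \eqref{eq:linear_pde}, and your It\^o computation on the discounted process $e^{-\int_t^s q^i_r(x_r)\,dr}Z^i_s(x_s)$ under $R^i$ is the standard proof of that lemma; the same process, normalized by $Z^i_t(x)$, reappears in the paper's proof of Theorem~\ref{thm:optimal_measure}.

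Your extra care is worthwhile, because $\beta$ may have negative entries, so the textbook nonnegative-potential hypothesis can fail. Lower bounds on $\sum_j\beta_{ij}C^j_s$ and $\sum_j\beta_{ij}\Psi_j$ already suffice for your domination step, and these also cover the paper's quadratic simulation costs when $|\gamma|<1$.
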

\begin{rem}[Sampling Independence]
\label{rem:sampling}
The decoupled nature of the linear PDEs allows the expectation in \eqref{eq:feynman_kac} for each player to be evaluated independently via forward Monte Carlo trajectory sampling.  This entirely bypasses the need for spatial discretization grids, thereby overcoming the curse of dimensionality typically associated with solving multi-agent HJB equations.
\end{rem}

\section{OPTIMAL CONTROL COMPUTATION AND MEASURE RECOVERY}
\label{sec:optimal_control_measure}

 In this section, we demonstrate how the optimal control can be evaluated via forward trajectory sampling, and we formally connect this result back to the original measure-theoretic general-sum game defined in \eqref{eq:distribution_game}.

\subsection{Path Integral Control via Monte Carlo Sampling}

Recall from the stacked first-order conditions \eqref{eq:optimal_u_stacked} and the Cole-Hopf transformation \eqref{eq:cole_hopf} that the optimal control for player $i$ is given by $u_t^{i*} = \bar u_t^i  + g(x_t)^\top \nabla_x \log Z_t^i(x_t)$, or equivalently, $u_t^{i*} = \bar u_t^i + \frac{1}{Z_t^i(x_t)} g(x_t)^\top \nabla_x Z_t^i(x_t)$. The following theorem establishes how this gradient can be computed directly from forward sampling without taking spatial derivatives.

\begin{thm}[Path Integral Control under the Reference Measure]
\label{thm:path_integral_control}
Let $R_{t,x}^i$ denote the reference path measure from Corollary 1 (with dynamics \eqref{eq:nominal_dynamics}), and let 
$S_t^i(\omega) := \sum_{j=1}^N \beta_{ij} \big[ \int_t^T C_s^j(x_s)ds + \Psi_j(x_T) \big]$ 
be the interaction-adjusted path cost from \eqref{eq:feynman_kac}. Then the optimal feedback control satisfies
\begin{equation}
    u_t^{i*}(x) = \bar{u}_t^i + \lim_{\delta t \to 0} \frac{1}{\delta t} \frac{\mathbb{E}_{R_{t,x}^i} \!\left[ e^{-S_t^i(\omega)} \delta \bar{w}_t^i \right]}{\mathbb{E}_{R_{t,x}^i} \!\left[ e^{-S_t^i(\omega)} \right]},
    \label{eq:pi_control_corrected}
\end{equation}
where $\delta \bar{w}_t^i := \bar{w}_{t+\delta t}^i - \bar{w}_t^i$.
\end{thm}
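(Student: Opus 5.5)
We start from the control expression recalled just above the statement, $u_t^{i*} = \bar u_t^i + Z_t^i(x)^{-1} g(x)^\top \nabla_x Z_t^i(x)$. It therefore suffices to show
\[
g(x)^\top \nabla_x Z_t^i(x) = \lim_{\delta t\to 0}\frac{1}{\delta t}\,\mathbb{E}_{R^i_{t,x}}\!\big[e^{-S_t^i(\omega)}\,\delta\bar w_t^i\big],
\]
because Corollary \ref{cor:feynman_kac} already identifies the denominator as $\mathbb{E}_{R^i_{t,x}}[e^{-S_t^i}] = Z_t^i(x)$.

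The plan is a one-step Markov decomposition of the path integral. First, condition on $\mathcal{F}_{t+\delta t}$ under $R^i_{t,x}$ and use the tower property. Split the cost as $S_t^i = \int_t^{t+\delta t}\sum_j\beta_{ij}C_s^j\,ds + S_{t+\delta t}^i$. The Markov property and \eqref{eq:feynman_kac} then give
\[
\mathbb{E}_{R^i_{t,x}}\big[e^{-S_t^i}\delta\bar w_t^i\big] = \mathbb{E}_{R^i_{t,x}}\big[e^{-\int_t^{t+\delta t}\sum_j\beta_{ij}C^j_s ds}\, Z^i_{t+\delta t}(x_{t+\delta t})\,\delta\bar w_t^i\big].
\]
The running-cost exponential equals $1+O(\delta t)$. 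Since $\mathbb{E}\|\delta\bar w_t^i\|=O(\sqrt{\delta t})$, replacing it by $1$ costs only $O(\delta t^{3/2})$, which vanishes after dividing by $\delta t$. This uses boundedness of $C^j$ near $x$, or at least local integrability.

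Next, apply Itô's formula to $Z^i_{s}(x_s)$ on $[t,t+\delta t]$ under the nominal dynamics \eqref{eq:nominal_dynamics}:
\[
Z^i_{t+\delta t}(x_{t+\delta t}) = Z_t^i(x) + \int_t^{t+\delta t}(\partial_s+\mathcal{L}^i)Z^i_s\,ds + \int_t^{t+\delta t}\nabla_x Z^i_s(x_s)^\top g(x_s)\,d\bar w_s^i .
\]
Multiply by $\delta\bar w_t^i$ and take expectations term by term.
\begin{itemize}
\item The constant term gives $Z_t^i(x)\,\mathbb{E}[\delta\bar w_t^i]=0$.
\item The $ds$ term has an integrand of order one, so it contributes $O(\delta t^{3/2})$. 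Alternatively, by \eqref{eq:linear_pde} the integrand equals $\sum_j\beta_{ij}C^j Z^i$, which is bounded.
\item For the stochastic integral, Itô's isometry gives $\mathbb{E}\big[\int_t^{t+\delta t} g(x_s)^\top\nabla_x Z^i_s(x_s)\,ds\big] = \delta t\, g(x)^\top\nabla_x Z_t^i(x) + o(\delta t)$, by continuity of $s\mapsto g(x_s)^\top\nabla Z^i_s(x_s)$ at $s=t$.
\end{itemize}
Dividing by $\delta t$ and letting $\delta t\to 0$ gives the displayed identity. Dividing by $Z_t^i(x)>0$ then yields \eqref{eq:pi_control_corrected}.

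The main obstacle is justifying the interchange of limits and expectations. This requires $Z^i\in C^{1,2}$ with $\nabla_x Z^i$ of at most polynomial growth, so that the Itô integral is a true martingale and dominated convergence applies. It also requires uniform integrability of $e^{-S_t^i}\delta\bar w_t^i/\sqrt{\delta t}$. The latter is delicate because $\beta$ may have negative entries, so $e^{-S_t^i}$ need not be bounded. The first approach would be to impose boundedness of $C^j$ and $\Psi_j$, or growth conditions ensuring $\mathbb{E}_{R^i}[e^{-2S^i_t}]<\infty$, and then use Cauchy–Schwarz together with the standard regularity of Feynman–Kac solutions for \eqref{eq:linear_pde} under the regularity assumptions already made on $f$ and $g$.
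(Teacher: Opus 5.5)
Your proposal is correct and has the same structure as the paper's proof. Feynman--Kac identifies the denominator as $Z_t^i(x)$, the identity $g(x)^\top\nabla_x Z_t^i(x)=\lim_{\delta t\to 0}\frac{1}{\delta t}\mathbb{E}_{R^i_{t,x}}[e^{-S_t^i}\delta\bar w_t^i]$ handles the numerator, and the result follows from $u_t^{i*}=\bar u_t^i+g^\top\nabla_x\log Z_t^i$. The only difference is that the paper simply invokes this gradient identity as the ``standard path-integral first-variation identity,'' whereas you derive it (tower property, It\^o's formula under the nominal dynamics, It\^o isometry) and state the regularity and integrability assumptions it needs.
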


\begin{proof}
Under the reference measure $R_{t,x}^i$, the desirability function has the Feynman-Kac representation
\[
    Z_t^i(x)
    =
    \mathbb E_{R_{t,x}^i}\!\left[e^{-S_t^i(\omega)}\right]
\]
Moreover, the standard path-integral first-variation identity gives
\begin{equation}
    g(x)^\top \nabla_x Z_t^i(x)
    =
    \lim_{\delta t\to 0}
    \frac{1}{\delta t}
    \mathbb E_{R_{t,x}^i}\!\left[
        e^{-S_t^i(\omega)}\,\delta \bar w_t^i
    \right]
    \label{eq:gradient_noise_identity}
\end{equation}
Dividing by
\[
    Z_t^i(x)=\mathbb E_{R_{t,x}^i}\!\left[e^{
    S_t^i(\omega)}\right]
\]
yields
\[
    g(x)^\top \nabla_x \log Z_t^i(x)
    =
    \lim_{\delta t\to 0}
    \frac{1}{\delta t}
    \frac{
        \mathbb E_{R_{t,x}^i}\!\left[
            e^{-S_t^i(\omega)}\,\delta \bar w_t^i
        \right]
    }{
        \mathbb E_{R_{t,x}^i}\!\left[
            e^{-S_t^i(\omega)}
        \right]
    }
\]
Substituting this into
\[u_t^{i*}(x)=\bar u_t^i+g(x)^\top \nabla_x \log Z_t^i(x)\]
proves \eqref{eq:pi_control_corrected}.
\end{proof}

\begin{rem}
Theorem \ref{thm:path_integral_control} shows that the optimal control correction
\(u_t^{i*}(x)-\bar u_t^i\) is a weighted average of the reference noise realizations. Trajectories with lower interaction-adjusted cost $S_t^i(\omega)$ receive higher exponential weight and therefore exert greater influence on the optimal control update.
\end{rem}

\subsection{Recovery of the Optimal Probability Measure}

We now return to the original KL formulation. The objective is to identify the optimal path measure $P_{t,x}^{i*}\ll R_{t,x}^i$ induced by the optimal feedback control.

\begin{thm}[Optimal Measure]
\label{thm:optimal_measure}
For each player $i$, let $P_{t,x}^{i*}$ denote the path measure induced by the optimal closed-loop control $u^{i*}$ starting from $x_t=x$. Then the optimal measure is the exponentially tilted reference measure
\begin{equation}
    \frac{dP_{t,x}^{i*}}{dR_{t,x}^i}(\omega)
    =
    \frac{e^{-S_t^i(\omega)}}{Z_t^i(x)},
    \qquad \forall \omega\in\Omega.
    \label{eq:optimal_measure_explicit_corrected}
\end{equation}
\end{thm}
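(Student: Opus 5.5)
The plan is to compute the Radon--Nikodym derivative $dP_{t,x}^{i*}/dR_{t,x}^i$ directly from Girsanov's theorem and then identify it with $e^{-S_t^i}/Z_t^i(x)$ by applying It\^o's formula to $\log Z_s^i(x_s)$ along reference trajectories, using the linear PDE \eqref{eq:linear_pde}.

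\textbf{Step 1 (Girsanov).} Under $P_{t,x}^{i*}$ the input satisfies $dv_s = u_s^{i*}ds + dw_s^i$, while under $R_{t,x}^i$ it satisfies $dv_s = \bar u_s^i ds + d\bar w_s^i$. Write $\phi_s := u_s^{i*}-\bar u_s^i = g(x_s)^\top \nabla_x \log Z_s^i(x_s)$, which is the expression for the optimal feedback given at the start of this section. Expressed with the $R$-Brownian motion $\bar w^i$, Girsanov's theorem gives
\begin{equation*}
\log\frac{dP_{t,x}^{i*}}{dR_{t,x}^i} = \int_t^T \phi_s^\top d\bar w_s^i - \frac{1}{2}\int_t^T \|\phi_s\|^2 ds ,
\end{equation*}
exactly as in \eqref{eq:rn_cross_initial}. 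We need a Novikov-type condition on $\phi$; I will assume enough regularity and positivity of $Z^i$ for it to hold.

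\textbf{Step 2 (It\^o along the reference dynamics).} Set $V_s := \log Z_s^i(x_s)$, with $x_s$ evolving by \eqref{eq:nominal_dynamics}. It\^o's formula gives
\[dV = \bigl(\partial_s Z + (f+g\bar u^i)^\top\nabla Z + \tfrac12\mathrm{Tr}(gg^\top\nabla^2 Z)\bigr)/Z\,ds - \tfrac12\|g^\top\nabla Z/Z\|^2 ds + (g^\top \nabla Z/Z)^\top d\bar w^i .\]
By \eqref{eq:linear_pde}, the first bracket divided by $Z$ equals $\sum_j\beta_{ij}C_s^j(x_s)$. Also $g^\top\nabla Z/Z=\phi_s$. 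Integrating from $t$ to $T$ and using \eqref{eq:linear_boundary} for $V_T$ yields
\begin{equation*}
-\sum_j\beta_{ij}\Psi_j(x_T) - \log Z_t^i(x) = \sum_j \beta_{ij}\int_t^T C_s^j ds - \frac12\int_t^T\|\phi_s\|^2ds + \int_t^T \phi_s^\top d\bar w_s^i .
\end{equation*}
Rearranging gives $\int\phi^\top d\bar w - \tfrac12\int\|\phi\|^2 = -S_t^i(\omega) - \log Z_t^i(x)$.

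\textbf{Step 3 (conclusion).} Comparing Steps 1 and 2 gives $\log\frac{dP^{i*}_{t,x}}{dR^i_{t,x}} = -S_t^i - \log Z_t^i(x)$ pathwise ($R$-a.s.), which is \eqref{eq:optimal_measure_explicit_corrected}. As a consistency check, the right-hand side integrates to one under $R^i_{t,x}$ by Corollary \ref{cor:feynman_kac}. Because of this, the Girsanov density is a true martingale, which handles the Novikov concern a posteriori.

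\textbf{Main obstacle.} The main obstacle is technical rather than algebraic. It\^o's formula needs $Z^i$ to be $C^{1,2}$ and strictly positive; positivity is clear from the Feynman--Kac formula. The stochastic exponential must also be a genuine martingale rather than only a local one. I would handle this with a localization argument via stopping times, together with the fact that the candidate density already has unit mean.
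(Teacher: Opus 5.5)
Your proposal is correct and is essentially the paper's argument. The paper applies It\^o's formula under $R_{t,x}^i$ to the multiplicative process $M_s^i = \exp\!\big(-\int_t^s \sum_j \beta_{ij} C_r^j\,dr\big) Z_s^i(x_s)/Z_t^i(x)$, uses the linear PDE to cancel the drift, and identifies $M^i$ as the Girsanov density with $M_T^i = e^{-S_t^i}/Z_t^i(x)$; this is exactly your It\^o computation for $\log Z_s^i(x_s)$ in exponentiated form. Your extra remark that the candidate density has unit $R_{t,x}^i$-mean by Feynman--Kac, so the nonnegative local martingale is a true martingale, supplies a technical justification the paper leaves implicit.
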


\begin{proof}
Define the normalized process $M_s^i := \exp\!\big( \!-\!\int_t^s \sum_{j=1}^N \beta_{ij} C_r^j(x_r) \, dr \big) \frac{Z_s^i(x_s)}{Z_t^i(x)}$ for $s \in [t,T]$. 
Applying It\^o's lemma under the reference measure $R_{t,x}^i$ and substituting the linear PDE for $Z^i$, the drift terms cancel, yielding
\begin{align}
    dM_s^i &= M_s^i \, \big( g(x_s)^\top \nabla_x \log Z_s^i(x_s) \big)^\top d\bar w_s^i  \nonumber \\
    &= M_s^i \, \big( u_s^{i*}(x_s)- \bar u_s^i \big)^\top d\bar w_s^i,
\end{align}
where the second equality follows from the optimal control relation. Thus, $M_s^i$ is the density process (stochastic exponential) mapping the reference noise to the optimally controlled noise, meaning Girsanov's theorem gives $M_T^i = \frac{dP_{t,x}^{i*}}{dR_{t,x}^i}(\omega)$. 
Evaluating $M_T^i$ using the terminal condition $Z_T^i(x_T) = \exp\!\big( \!-\!\sum_{j=1}^N \beta_{ij}\Psi_j(x_T) \big)$ directly yields $M_T^i = e^{-S_t^i(\omega)} / Z_t^i(x)$, which completes the proof.
\end{proof}

\section{Simulation}We illustrate the proposed framework using a two-player, one-dimensional game over the horizon $t \in [0,T]$ with state space $\mathcal{X}=\mathbb{R}$. This example is  designed to highlight the effect of the cross-log-likelihood coupling term in the game.
\subsection{Game Setup}  To ensure that all equilibrium behavior is driven purely by the game's cost structure rather than asymmetric initial conditions or priors, both players share a common initial state $x_0 = 0$ and an identical baseline reference process $R$:$$dx_t = \sigma dw_t$$The controlled process associated with each player $i \in \{1,2\}$ allows control to enter through the same channel as the diffusion:$$dx_t = \sigma\bigl(u^i(t,x_t)dt + dw_t\bigr)$$Each player is assigned a quadratic well shaped state cost with moving well center as shown in \ref{fig:main_result}. The well centers are defined as $m_1(t) = -a(t/T)$ and $m_2(t) = a(t/T)$. Both wells begin at the origin and separate linearly over time, meaning Player 1 progressively prefers the left side of the state space with the terminal goal of $x=+a$, while Player 2 prefers the right with the terminal goal of $x=-a$, penalized by the following running and terminal costs:$$C_i(t,x) = \frac{q}{2}\bigl(x-m_i(t)\bigr)^2, \qquad \Psi_i(x) = \frac{q_T}{2}\bigl(x-m_i(T)\bigr)^2.$$ The interaction between the players is governed by the coupling matrix $\alpha(\gamma) = \left[\begin{smallmatrix} 1 & \gamma \\ \gamma & 1 \end{smallmatrix}\right]$ and its inverse $\beta(\gamma)=\beta(\gamma)^{-1}$, where $|\gamma|<1$. The parameter $\gamma$ dictates the interaction regime: a positive $\gamma$ creates a repulsive effect (congestion avoidance), while a negative $\gamma$ encourages spatial overlap (cohesion).\subsection{Computation}We evaluate the equilibrium using two methods. First, to recover the optimal measure $P_i^\ast$ from the initial condition (Theorem \ref{thm:optimal_measure}), we draw an ensemble of trajectories under the common reference process $R$ and assign each trajectory $\omega$ the weight $w_i \propto \exp(-S_i(\omega))$, where $S_i(\omega) = \sum_{j=1}^2 \beta_{ij} \left(\int_0^T C_j dt + \Psi_j\right)$. Second, to compute the optimally controlled trajectories, we use the state-feedback law $u_i^\ast(t,x)$  as given in Theorem \ref{thm:path_integral_control}. 
\begin{figure}[t]
\centering
\includegraphics[width=\columnwidth]{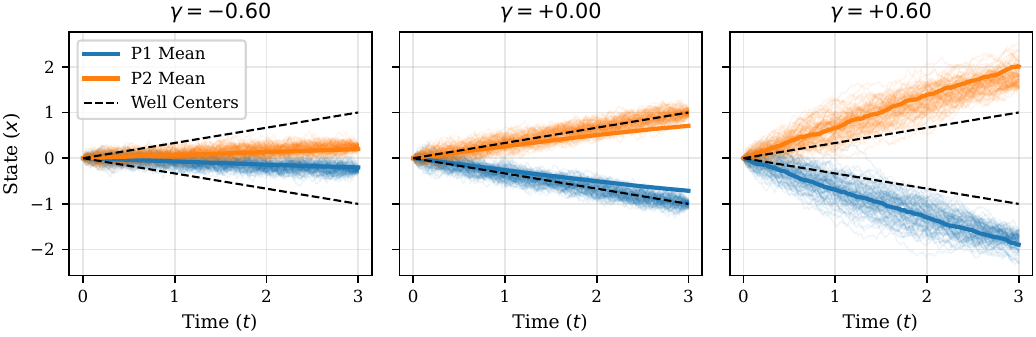}
\caption{ Equilibrium Measures.}
\label{fig:main_result}
\end{figure}
\begin{figure}[t]
\centering
\includegraphics[width=\columnwidth]{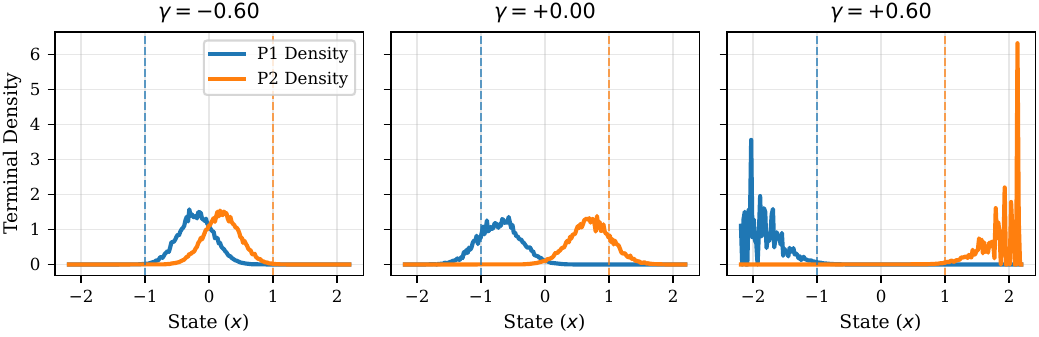}
\caption{Terminal Densities.}
\label{fig:terminal_result}
\end{figure}
\begin{figure}[t]
\centering
\includegraphics[width=\columnwidth, height=4cm, keepaspectratio]{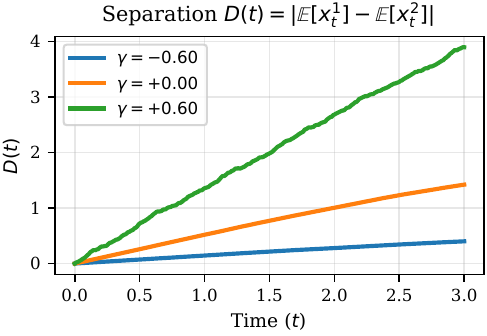}
\caption{ Expectation Distance.}
\label{fig:distance}
\end{figure}
\begin{figure}[t]
\centering
\includegraphics[width=\columnwidth]{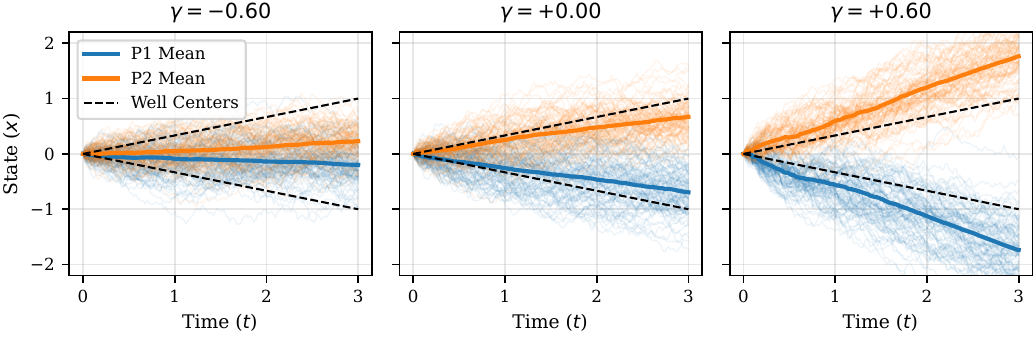}
\caption{ Feedback Controlled Equilibrium Trajectories.}
\label{fig:controlled_trajectories}
\end{figure}
\begin{figure}[t]
\centering
\includegraphics[width=\columnwidth]{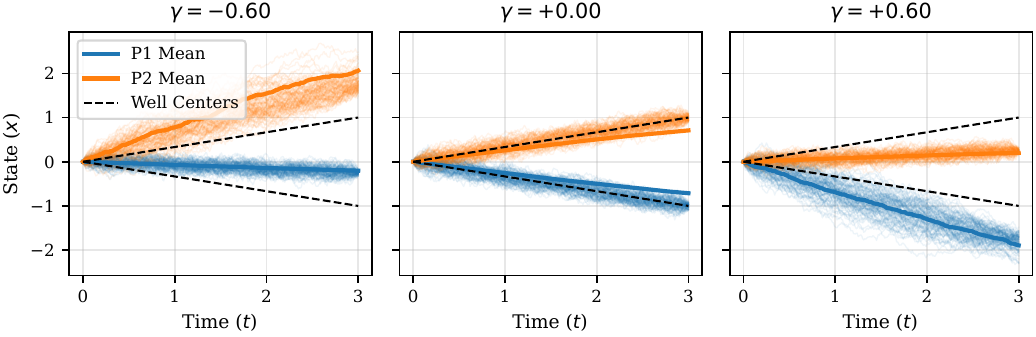}
\caption{ Asymmetric Interaction based Equilibrium Measures}
\label{fig:asymmetric_measures}
\end{figure}
\subsection{Results}Figures \ref{fig:main_result} and \ref{fig:controlled_trajectories} demonstrate the distributional behavior across three distinct interaction regimes ($\gamma \in \{-0.6, 0.0, 0.6\}$). Notably, both figures represent the exact same Nash equilibrium computed via two different computational perspectives. Figure \ref{fig:main_result} depicts the equilibrium measure obtained by reweighting uncontrolled reference trajectories, while \ref{fig:controlled_trajectories} illustrates the trajectories resulting from the closed-loop optimal control law. When uncoupled ($\gamma = 0.0$), the game reduces to a standard single-agent optimal control; the empirical mean trajectories follow their respective moving wells while maintaining the baseline reference. In the repulsive regime ($\gamma = 0.6$), the cross-divergence cost penalizes overlapping distributions. Consequently, players exhibit proactive congestion avoidance, taking wider, sub-optimal tracking routes to maintain a spatial buffer. Conversely, in the attractive regime ($\gamma = -0.6$), players actively compromise their individual state cost goals to stay closer to the origin, increasing the shared probability mass. Finally, Figure \ref{fig:distance} quantifies the temporal separation between these distributions, while Figure \ref{fig:terminal_result} illustrates the spatial deviation from the terminal target induced by the cross-log-likelihood coupling. We also consider an additional asymmetric coupling regime in which the off-diagonal interaction terms have opposite signs defined by $\alpha(\gamma) = \left[\begin{smallmatrix} 1 & -\gamma \\ \gamma & 1 \end{smallmatrix}\right]$. Unlike the symmetric benchmark cases, this non-reciprocal regime penalizes one player for overlap while the other is encouraged toward it, as illustrated in Figure \ref{fig:asymmetric_measures}. This shows that the proposed game framework captures reciprocal behaviors, such as mutual congestion avoidance or cohesion, while also accounting for nonreciprocal interactions like pursuit-evasion.

\section{Conclusion}
We presented a class of measure-theoretic general sum dynamic game and it's equivalent continuous time nonlinear stochastic differential game. We showed that the resulting coupled nonlinear
HJB equations can be exactly decoupled and linearized via a
multivariate Cole-Hopf transformation.  The linearized system admits a Feynman-Kac path-integral representation, allowing optimal Nash equilibrium strategies to be computed through forward Monte Carlo sampling without spatial discretization, thereby circumventing the curse of dimensionality. Simulations on a two-player problem show that the proposed game can capture reciprocal behaviors like congestion avoidance or cohesion, as well as asymmetric interactions like pursuit-evasion at the distributional level.

\bibliographystyle{IEEEtran} 
\bibliography{bibliography.bib}
\end{document}